\newcolumntype{R}[1]{>{\raggedleft\arraybackslash }b{#1}}
\newcolumntype{L}[1]{>{\raggedright\arraybackslash }b{#1}}
\newcolumntype{C}[1]{>{\centering\arraybackslash }b{#1}}
\newtheorem{theorem}{Theorem}[section]
\theoremstyle{plain}
\newtheorem{case}{Case}
\newtheorem{corollary}{Corollary}
\newtheorem{proposition}{Proposition}
\newtheorem{remark}{Remark}
\numberwithin{equation}{section}
\begin{document}
\title{Uniform in bandwidth consistency  for the transformation kernel estimator of copulas}
\author{Cheikh Tidiane Seck}
\author{Diam Ba}

\author{Gane Samb Lo}

\date{\today}
\subjclass[2000]{Primary 62G05, 62G07; Secondary 60F12, 62G20}
\keywords{Copula function ; Nonparametric estimation ; Transformation kernel estimator ; Uniform in bandwidth consistency.}

\begin{abstract}
In this paper we establish the uniform in bandwidth consistency for the transformation kernel estimator of copulas introduced in \cite{r6}. To this end, we  prove a uniform in bandwidth law of the iterated logarithm for the maximal deviation of this estimator from its expectation. We then show that, as $n$ goes to infinity, the bias of the estimator converges to zero uniformly in the bandwidth $h$ varying over a suitable interval. A practical method of selecting the optimal bandwidth is presented. Finally, we make conclusive simulation experiments, showing the performance of the estimator on finite samples.\\

\end{abstract}

\maketitle

\section{Introduction}
Let $(X_1,Y_1),...,(X_n,Y_n)$ be an independent and identically distributed random sample of a random vector $(X,Y)$, with joint cumulative distribution function $H$ and marginal distribution functions $F$ and $G$.  Let $K(\cdot,\cdot)$ represent a multiplicative kernel distribution function ; i.e., $K(x,y)=K(x)K(y)$ and $0<h_n<1$ denote a bandwidth sequence. The transformation kernel estimator of copulas introduced in \cite{r6} is defined as follows :
\begin{equation}\label{te}
\hat{C}_{n}^{(T)}(u,v)=\frac{1}{n}\sum_{i=1}^{n}K\left(\frac{\phi^{-1}(u)-\phi^{-1}(\hat{U}_i)}{h_n} \right)K\left(\frac{\phi^{-1}(v)-\phi^{-1}(\hat{V}_i)}{h_n} \right),
\end{equation}
where $\phi$ is an increasing transformation and $\hat{U}_i$, $\hat{V}_i$ are pseudo-observations. It is customary in coplua  estimation to take $\hat{U}_i=\frac{n}{n+1}F_n(X_i)$, $\hat{V}_i=\frac{n}{n+1}G_n(Y_i)$, where $F_n$ and $G_n$ are the empirical marginal cumulative distribution functions. This estimator presents an advantage comparatively to the estimator proposed by Fermanian \textit{et al.} \cite{r3}(2004), as it does not depend on the marginal distributions. Taking $\phi$ equal to the standard Gaussian distribution leads to the  Probit transformation proposed, for instance, in Marron and Ruppert \cite{mr} (1994). For nonparametric kernel estimation for the copula density using the Probit transformation, we refer to Geenens \textit{et al.} \cite{gcp} (2014) and references therein.  \\

In this paper we are concerned with kernel estimation for the copula function, avoiding as such the inconsistency problem due to many unbounded copula densities. However, there is a boundary bias problem which may be solved by using the transformation kernel estimator \eqref{te}, with a suitable bandwidth. Since the choice of the bandwidth is problematic for $\hat{C}_{n}^{(T)}(u,v)$, as pointed out in \cite{r6}, we shall deal with a variable bandwidth $h$ that may depend either on the data or the location point $(u,v)$. Thus, we define the following estimator :
\begin{equation}\label{e1}
\hat{C}_{n,h}^{(T)}(u,v)=\frac{1}{n}\sum_{i=1}^{n}K\left(\frac{\phi^{-1}(u)-\phi^{-1}(\hat{U}_i)}{h} \right)K\left(\frac{\phi^{-1}(v)-\phi^{-1}(\hat{V}_i)}{h} \right).
\end{equation}
We shall assume that $K(.)$ is the integral of a symmetric bounded kernel $k(.)$ supported on $[-1,1]$ satisfying the following conditions :
\begin{itemize}
\item[(K.1)] $\int_{-1}^{1}k(s)ds=1$ ;
\item[(K.1)] $k(.)$ is a 2-order kernel ; i.e., $\int_{-1}^{1}sk(s)ds=0$ and $\int_{-1}^{1}s^2k(s)ds\neq 0$.
\end{itemize} 
 Our main goal is to establish the strong consistency of $\hat{C}_{n,h}^{(T)}(u,v)$ uniformly in $h$ varying over a suitable interval $[a_n, b_n],\,0< a_n\leq b_n<1$. These results enable us to apply  various methods of bandwidth selection and obtain the consistency of  estimators, under  certain conditions on $h$.\\
 
 The rest of the paper is organized as follows. In Section 2, we state our main theoreticla results and give their proofs. In Section 3, we present a practical method for seclecting the bandwith, which is based on a cross-validation criterion. In Section 4, we make a simulation study using data generated with the Frank copula. Finally, the paper is ended by an Appendix. 
 \section{Main results }\label{ssec2}
We state our theoretical results in this section 
\begin{theorem}\label{t1}
Suppose that the copula function $C(u,v)$ has bounded first-order partial derivatives on $(0, 1)^2$ and 
 the transformation $\phi$ admits a bounded derivative $\phi'$. Then, for any sequence of positive constants $(b_n)_{n\geq 1}$ satisfying $0<b_n<1, b_n\rightarrow 0$ and $b_n\geq (\log n)^{-1}$, we have almost surely, for some $c>0$, as $n\rightarrow\infty$ 
\begin{equation}\label{e2}
R_n\sup_{\frac{c\log n}{n}\le h\le b_n}\sup_{(u,v)\in(0,1)^2}\left|\hat{C}_{n,h}^{(T)}(u,v)-\mathbb{E}\hat{C}_{n,h}^{(T)}(u,v)\right|=O(1),
\end{equation}
where $R_n =\left(\frac{n}{2\log\log n}\right)^{1/2}$.
\end{theorem}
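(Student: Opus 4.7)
The natural plan is to reduce the estimator built on pseudo-observations to one built on the true (unobservable) uniforms, and then invoke an Einmahl--Mason type uniform-in-bandwidth law of the iterated logarithm for empirical processes indexed by a VC-type class. Introduce the oracle quantity
$$\tilde C_{n,h}(u,v)=\frac{1}{n}\sum_{i=1}^{n}K\!\left(\frac{\phi^{-1}(u)-\phi^{-1}(U_i)}{h}\right)K\!\left(\frac{\phi^{-1}(v)-\phi^{-1}(V_i)}{h}\right),$$
with $U_i=F(X_i)$ and $V_i=G(Y_i)$, and decompose
$$\hat C_{n,h}^{(T)}-\mathbb{E}\hat C_{n,h}^{(T)}=\underbrace{(\hat C_{n,h}^{(T)}-\tilde C_{n,h})}_{A_n}+\underbrace{(\tilde C_{n,h}-\mathbb{E}\tilde C_{n,h})}_{B_n}+\underbrace{(\mathbb{E}\tilde C_{n,h}-\mathbb{E}\hat C_{n,h}^{(T)})}_{C_n}.$$

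For the centered oracle term $B_n$, I would show that the indexing class
$$\mathcal F=\Bigl\{(s,t)\mapsto K\!\left(\tfrac{\phi^{-1}(u)-\phi^{-1}(s)}{h}\right)K\!\left(\tfrac{\phi^{-1}(v)-\phi^{-1}(t)}{h}\right):(u,v)\in(0,1)^2,\ h\in[c\log n/n,b_n]\Bigr\}$$
is of VC-type with envelope $1$ (it is a pointwise product of two indexed families of bounded monotone functions of one variable, each parametrised by two real numbers). Talagrand's concentration inequality, a geometric blocking of the bandwidth window $[c\log n/n, b_n]$, and a Borel--Cantelli argument then yield
$$\sup_{c\log n/n\le h\le b_n}\sup_{(u,v)\in(0,1)^2}|B_n(u,v,h)|=O\!\left(\sqrt{\log\log n/n}\right)\quad\text{a.s.},$$
which is exactly the $R_n^{-1}$ rate claimed in \eqref{e2}.

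For the replacement terms $A_n$ and $C_n$, I would use that $K$ is Lipschitz with constant $\|k\|_\infty$ and that the pointwise difference inside the sum vanishes outside the effective support $|\phi^{-1}(u)-\phi^{-1}(U_i)|\lesssim h$ of $k$. On such indices, the mean value theorem applied to $\phi^{-1}$ gives $|\phi^{-1}(\hat U_i)-\phi^{-1}(U_i)|\le C(u)\,|\hat U_i-U_i|$, where $C(u)$ is a local bound on $(\phi^{-1})'$ near $\phi^{-1}(u)$. The Chung--Smirnov LIL for $\sup_x|F_n(x)-F(x)|$ gives $\sup_i|\hat U_i-U_i|=O(\sqrt{\log\log n/n})$ a.s., and the effective number of terms is $O(nh)$, so the contribution is of order
$$\frac{1}{n}\cdot nh\cdot\frac{\|k\|_\infty}{h}\cdot C(u)\sqrt{\log\log n/n}=O\!\left(\sqrt{\log\log n/n}\right)=O(R_n^{-1})$$
uniformly in $(u,v,h)$, and $C_n$ is handled identically after taking expectation.

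The main obstacle is the boundary behaviour of $\phi^{-1}$: since the hypothesis only bounds $\phi'$ from above, $(\phi^{-1})'=1/\phi'\!\circ\phi^{-1}$ may blow up near $\{0,1\}$, and the constant $C(u)$ above need not be uniform in $u\in(0,1)$. I would resolve this by splitting $(0,1)^2$ into a bulk region $[\delta_n,1-\delta_n]^2$ on which the Lipschitz estimate holds uniformly, and a boundary strip on which one uses the trivial bound $K\le 1$ together with the almost-sure control $\#\{i:U_i\notin[\delta_n,1-\delta_n]\}=O(n\delta_n)$ coming from the DKW inequality. Choosing $\delta_n\to 0$ slowly enough so that both contributions remain $O(R_n^{-1})$ closes the argument; the lower endpoint $h\ge c\log n/n$ is precisely what makes the product $nh\cdot h^{-1}=n$ absorb into the $O(n)$ count without extra log losses.
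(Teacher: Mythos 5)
Your overall architecture is genuinely different from the paper's. The paper never separates off an oracle estimator: it folds the rank maps $\zeta_{1,n},\zeta_{2,n}$ into the index set of a single function class $\mathcal{G}$ whose elements are the smoothed term \emph{minus the indicator} $\mathbb{I}\{s\le u,t\le v\}$, verifies that this class has second moment $O(h)$ (condition (G.ii)) and is VC, applies Mason--Swanepoel to conclude that $\sqrt{n}D_{n,h}-\widetilde{\mathbb{C}}_n$ is $O(\sqrt{h\log\log n})=o(\sqrt{\log\log n})$ uniformly in $h\le b_n$, and then controls the residual empirical copula process $\widetilde{\mathbb{C}}_n$ by a Kiefer-process approximation and Wichura's LIL. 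Your term $B_n$ is handled acceptably by your VC/Talagrand/LIL argument. The problem is the replacement terms $A_n$ and $C_n$, which the paper's construction is specifically designed to avoid ever having to estimate.

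The gap is in your boundary treatment of $A_n$. Your bulk bound requires $|\phi^{-1}(\hat U_i)-\phi^{-1}(U_i)|\le C(\delta_n)\,|\hat U_i-U_i|$ with $C(\delta_n)=\sup_{\delta_n\le u\le 1-\delta_n}(\phi^{-1})'(u)$, and since the hypothesis only bounds $\phi'$ from above, $(\phi^{-1})'=1/\phi'\circ\phi^{-1}$ is unbounded near $0$ and $1$ (for the Probit map, $(\phi^{-1})'(u)\asymp 1/(u\sqrt{2\log(1/u)})$ as $u\to 0$). On the complementary strip $\{u\wedge v\le\delta_n\}$ your trivial bound $K\le 1$ combined with the count of contributing indices gives only $|A_n|=O(\delta_n+h)$, because every $i$ with $\min(U_i,\hat U_i)\le u+\|\phi'\|h$ can contribute a term of size up to $1$. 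Since $h$ ranges up to $b_n\ge(\log n)^{-1}$, this is of order $(\log n)^{-1}$, vastly larger than the target $R_n^{-1}=\sqrt{2\log\log n/n}$, no matter how $\delta_n$ is chosen; and shrinking $\delta_n$ to $O(R_n^{-1})$ to tame the strip makes $C(\delta_n)\sqrt{\log\log n/n}$ of order $(\log n)^{-1/2}$ for the Probit map, so the bulk estimate fails instead. There is no choice of $\delta_n$ that closes both sides. The way out is not a sharper counting bound but a variance-adaptive one: after subtracting the indicator, the relevant summands have second moment $O(h)$ \emph{uniformly} in $(u,v)$, including the boundary --- this is checked using only $|C(\phi(\phi^{-1}(u)-sh),v)-C(u,v)|\le\|C_u\|\,\|\phi'\|\,h$, i.e.\ only the upper bound on $\phi'$, with no inversion --- and a Talagrand/Mason--Swanepoel bound then yields $O(\sqrt{h\log\log n/n})=o(R_n^{-1})$ uniformly over $h\le b_n$ because $b_n\to 0$. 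That is precisely the paper's route, and some version of it (or at least a localized variance bound on the boundary strip) is needed to repair your argument.
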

 
 \begin{theorem} \label{t2}
Suppose that the copula function  $C(u,v)$  has bounded second-order partial  derivatives on $(0,1)^2$ and that the  transformation $\phi$ admits a bounded derivative $\phi'$. Then, for any sequence of positive constants $(b_n)_{n\geq 1}$ satisfying $0<b_n<1$ and $\sqrt{n}b_n^2/\sqrt{\log\log n}=o(1),$  we have almost surely, for some $c>0$, as $n\rightarrow\infty$,
\begin{equation} \label{biais}
R_n\sup_{0< h\le b_n}\sup_{(u,v)\in(0,1)^2}\vert \mathbb{E}\hat{C}_{n,h}^{(T)}(u,v) - C(u,v)\vert=o(1),
\end{equation}
where $R_n=\left(\frac{n}{2\log\log n}\right)^{1/2}$.
\end{theorem}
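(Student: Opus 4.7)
My plan is to split the bias as $\mathbb{E}\hat{C}_{n,h}^{(T)}(u,v)-C(u,v)=\bigl(\mathbb{E}\hat{C}_{n,h}^{(T)}-\mathbb{E}\tilde{C}_{n,h}\bigr)+\bigl(\mathbb{E}\tilde{C}_{n,h}-C\bigr)$, where $\tilde{C}_{n,h}$ is the oracle analogue of \eqref{e1} built on the unobservable true copula sample $U_i=F(X_i)$, $V_i=G(Y_i)$, so that $(U_i,V_i)\sim C$. I would then bound each summand by $o(R_n^{-1})$ uniformly in $(u,v)\in(0,1)^2$ and $h\in(0,b_n]$.

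For the oracle smoothing bias I would first obtain a closed form. Iterated integration by parts -- exploiting $K(\pm\infty)\in\{0,1\}$, the uniform marginals $C(s,1)=s$ and $C(1,t)=t$, and the decreasing change of variable $z=(\phi^{-1}(u)-\phi^{-1}(s))/h$ -- makes the boundary terms telescope, yielding the clean identity
\begin{equation*}
\mathbb{E}\tilde{C}_{n,h}(u,v)=\int_{-1}^{1}\!\!\int_{-1}^{1}k(z_1)k(z_2)\,C\bigl(\phi(\phi^{-1}(u)-hz_1),\phi(\phi^{-1}(v)-hz_2)\bigr)\,dz_1\,dz_2.
\end{equation*}
A second-order Taylor expansion of $C$ around $(u,v)$, using $|\phi(\phi^{-1}(u)-hz_1)-u|\le h|z_1|\,\|\phi'\|_{\infty}$ and the hypothesis that the second-order partials of $C$ are bounded, controls the quadratic remainder by $O(h^2(z_1^2+z_2^2))$, which integrates against $k(z_1)k(z_2)$ to $O(h^2)$; the linear terms, odd in the $z_i$, are annihilated by the symmetry $k(z)=k(-z)$. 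Hence $\sup_{(u,v)}|\mathbb{E}\tilde{C}_{n,h}(u,v)-C(u,v)|=O(h^2)$, and $R_n b_n^2=o(1)$ by the standing hypothesis.

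For the pseudo-observation term, $\mathbb{E}\hat{C}_{n,h}^{(T)}(u,v)-\mathbb{E}\tilde{C}_{n,h}(u,v)=\int\!\!\int K_h^u(s)K_h^v(t)\,d(\bar{C}_n-C)(s,t)$, where $\bar{C}_n=\mathbb{E}[C_n]$ is the expected rank-based empirical copula and $K_h^u(s)=K((\phi^{-1}(u)-\phi^{-1}(s))/h)$. The uniform bias $\sup_{(s,t)}|\bar{C}_n(s,t)-C(s,t)|=O(1/n)$ -- due to the rank adjustment $\hat{U}_i=nF_n(X_i)/(n+1)$ -- combined with the same integration by parts applied to the signed measure $d(\bar{C}_n-C)$, controls this contribution by $O(1/n)$ uniformly in $h$, so $R_n/n=O(1/\sqrt{n\log\log n})=o(1)$.

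The main obstacle is the elimination of the linear Taylor term: only $\phi'$ is assumed bounded, and the symmetrization $\int k(z)[\phi(a-hz)-\phi(a)]dz=\tfrac{1}{2}\int k(z)[\phi(a+hz)+\phi(a-hz)-2\phi(a)]dz$ upgrades the rate from $o(h)$ to $O(h^2)$ only once $\phi$ is taken to be $C^2$ with bounded $\phi''$ on the relevant range. This extra regularity holds for the Probit transformation used in the paper, but it is essential in order for the bandwidth condition $\sqrt{n}b_n^2/\sqrt{\log\log n}=o(1)$ to be tight; a secondary, more routine, technical point is to justify the two-dimensional integration-by-parts identity above for any bivariate distribution with uniform marginals.
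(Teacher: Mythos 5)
Your core argument is the same as the paper's: write $\mathbb{E}\hat{C}_{n,h}^{(T)}(u,v)$ as a double integral of $C$ evaluated at the shifted points $\phi(\phi^{-1}(u)-hz_1)$, $\phi(\phi^{-1}(v)-hz_2)$ against $k(z_1)k(z_2)$, Taylor-expand $C$ to second order, kill the linear terms with the symmetry $\int z k(z)\,dz=0$, and conclude a uniform $O(h^2)=O(b_n^2)$ bias, which $R_n$ cannot rescue from $o(1)$ under $\sqrt{n}b_n^2/\sqrt{\log\log n}=o(1)$. Where you genuinely diverge is the pseudo-observation effect. The paper replaces $\zeta_{1,n}^{-1}$ and $\zeta_{2,n}^{-1}$ by the identity via an appeal to Chung's LIL, quoting the rate $O(n^{-1}\log\log n)$; the correct LIL rate for $\sup_u|F\circ F_n^{-1}(u)-u|$ is $O(\sqrt{\log\log n/n})$, and since $C$ is only Lipschitz this feeds an error of that order into $B_{n,h}$, for which $R_n\cdot O(\sqrt{\log\log n/n})=O(1)$, not $o(1)$ --- so the paper's reduction does not actually close at the claimed rate, quite apart from the issue of using an almost-sure bound on a random quantity inside an expectation. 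Your route through $\bar{C}_n=\mathbb{E}[C_n]$ is the sounder one, because the \emph{bias} of the rank-based empirical copula is an order of magnitude smaller than its stochastic fluctuation; you should, however, actually supply the bound $\sup_{(s,t)}|\bar{C}_n(s,t)-C(s,t)|=o(\sqrt{\log\log n/n})$ (your asserted $O(1/n)$ is more than enough but is itself a nontrivial fact needing the second-order smoothness and a reference or proof).

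The obstacle you flag about $\phi$ is real and is not resolved in the paper either: with only $\phi'$ bounded, $\phi(\phi^{-1}(u)-hz)-u=-\phi'(\phi^{-1}(u))hz+o(h)$ leaves a remainder that is $o(h)$ but not $O(h^2)$, and it need not be uniform in $u$; after integration the surviving contribution is $o(h)$, which $R_n$ does not send to zero under the stated bandwidth condition. The paper's final display for $B_{n,h}$ simply drops these remainders. As you say, assuming $\phi''$ bounded (true for the Probit transformation) repairs this; without some such strengthening of the hypothesis on $\phi$, neither your argument nor the paper's delivers the stated $O(b_n^2)$ rate.
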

The following proposition is an immediate consequence of Theorem \ref{t1} and Theorem \ref{t2}
\begin{proposition}\label{pp1}
Let $a_n=c\log n/n$ for some  $c>0$ and $0<b_n<1$ such that $\sqrt{n}b_n^2/\sqrt{\log\log n}=o(1).$ Then, under the assumptions of theorems \ref{t1} and \ref{t2}, we have almost surely, as $n\rightarrow\infty$,
\begin{equation} \label{cv}
\sup_{a_n\le h\le b_n}\sup_{(u,v)\in(0,1)^2}\vert \hat{C}_{n,h}^{(T)}(u,v) - C(u,v)\vert \rightarrow 0.
\end{equation}
\end{proposition}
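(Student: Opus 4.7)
The plan is to read off Proposition \ref{pp1} directly from Theorems \ref{t1} and \ref{t2} via the standard bias--variance decomposition, with no new ingredients. The first step is to write, for every $h\in[a_n,b_n]$ and every $(u,v)\in(0,1)^2$,
\begin{equation*}
\hat{C}_{n,h}^{(T)}(u,v)-C(u,v)=\bigl(\hat{C}_{n,h}^{(T)}(u,v)-\mathbb{E}\hat{C}_{n,h}^{(T)}(u,v)\bigr)+\bigl(\mathbb{E}\hat{C}_{n,h}^{(T)}(u,v)-C(u,v)\bigr),
\end{equation*}
apply the triangle inequality, and then take the double supremum over $h\in[a_n,b_n]$ and $(u,v)\in(0,1)^2$.

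The second step is to check the compatibility of the ranges of $h$. With the choice $a_n=c\log n/n$ made in the statement, the range $[a_n,b_n]$ coincides with the range $[c\log n/n,b_n]$ used in Theorem \ref{t1}, and it is contained in the range $(0,b_n]$ used in Theorem \ref{t2}; the assumption $b_n\geq(\log n)^{-1}$ from Theorem \ref{t1} is automatic for $n$ large because the condition $\sqrt{n}b_n^2/\sqrt{\log\log n}=o(1)$ only forces $b_n\to 0$ and is compatible with $b_n\geq(\log n)^{-1}$ (one may, if needed, replace $b_n$ by $\max(b_n,(\log n)^{-1})$, which preserves both hypotheses). Thus both theorems apply simultaneously on $[a_n,b_n]$.

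The third step is to multiply through by $R_n^{-1}$. Theorem \ref{t1} yields
\begin{equation*}
\sup_{a_n\leq h\leq b_n}\sup_{(u,v)\in(0,1)^2}\bigl|\hat{C}_{n,h}^{(T)}(u,v)-\mathbb{E}\hat{C}_{n,h}^{(T)}(u,v)\bigr|=O(R_n^{-1})\quad\text{a.s.,}
\end{equation*}
while Theorem \ref{t2} gives
\begin{equation*}
\sup_{a_n\leq h\leq b_n}\sup_{(u,v)\in(0,1)^2}\bigl|\mathbb{E}\hat{C}_{n,h}^{(T)}(u,v)-C(u,v)\bigr|=o(R_n^{-1}).
\end{equation*}
Since $R_n=\sqrt{n/(2\log\log n)}\to\infty$, both $O(R_n^{-1})$ and $o(R_n^{-1})$ tend to zero, and \eqref{cv} follows by the triangle inequality.

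There is essentially no obstacle to overcome here: all the analytical work has been carried out in Theorems \ref{t1} and \ref{t2}, and the only minor point requiring attention is the bookkeeping on the range of $h$ so that both hypotheses on $(b_n)$ are simultaneously met. \qedhere
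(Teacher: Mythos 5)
Your overall strategy---the bias--variance decomposition, the triangle inequality, and the observation that $R_n\to\infty$ turns the $O(R_n^{-1})$ and $o(R_n^{-1})$ bounds of Theorems \ref{t1} and \ref{t2} into convergence to zero---is exactly what the paper intends: it offers no written proof and simply declares the proposition ``an immediate consequence'' of the two theorems. On that core argument there is nothing to object to.

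However, your second step (the ``bookkeeping on the range of $h$'') contains a genuine error. The two hypotheses on $(b_n)$ are not compatible: if $b_n\geq(\log n)^{-1}$, then
\begin{equation*}
\frac{\sqrt{n}\,b_n^2}{\sqrt{\log\log n}}\;\geq\;\frac{\sqrt{n}}{(\log n)^2\sqrt{\log\log n}}\;\longrightarrow\;\infty,
\end{equation*}
so the condition $\sqrt{n}b_n^2/\sqrt{\log\log n}=o(1)$ of Theorem \ref{t2} fails; conversely, that condition forces $b_n=o\bigl(n^{-1/4}(\log\log n)^{1/4}\bigr)$, which is eventually strictly smaller than $(\log n)^{-1}$. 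Your claim that $b_n\geq(\log n)^{-1}$ is ``automatic'' is therefore false, and the proposed fix of replacing $b_n$ by $\max(b_n,(\log n)^{-1})$ does not ``preserve both hypotheses''---it destroys the hypothesis of Theorem \ref{t2}. (This incompatibility is in fact a defect of the paper itself: the proposition, read literally, has an empty set of admissible sequences $b_n$, and the choice $b_n=(\log\log n/n^2)^{1/4}$ used in the paper's simulation section violates the condition of Theorem \ref{t1}.) To make the argument honest you would need to go back into the proof of Theorem \ref{t1} and check that its conclusion survives without $b_n\geq(\log n)^{-1}$; this is plausible, since in Corollary \ref{crl1} the only role of that condition is to bound $\vert\log b_n\vert/\log\log n$ by $1$, and one still has $h\vert\log h\vert\leq b_n\vert\log b_n\vert\to 0=o(\log\log n)$ for any $b_n\to 0$, so the normalized deviation still tends to zero (possibly at a slower rate than $O(\sqrt{b_n})$). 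As written, though, your reconciliation of the two ranges does not work, and this is precisely the one point where the proposition is not ``immediate.''
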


 \begin{proof} (\textbf{Theorem \ref{t1}})
We begin by some notation. Recall that $H_n$, $F_n$ and $G_n$ are the empirical cumulative distribution functions of $H$, $F$ and $G$, respectively. Then  the copula estimator based directly on Sklar's Theorem can be defined as 
$$ C_n(u,v)= H_n(F_n^{-1}(u),G_n^{-1}(v)),$$
with $F_n^{-1}(u)=\inf\{x:F_n(x)\geq u\}$ and $G_n^{-1}(v)=\inf\{x:F_n(x)\geq v\}$ the quantile functions corresponding to $F_n$ and $G_n$. Define the bivariate empirical copula process as
$$\mathbb{C}_n(u,v)=\sqrt{n}[C_n(u,v)- C(u,v)],\qquad (u,v)\in [0,1]^2$$
and introduce the following quantity.
$$ \widetilde{C}_n(u,v)=\frac{1}{n}\sum_{i=1}^n\mathbb{I}\{U_i\leq u,V_i\leq v\}$$
which represents the uniform bivariate empirical distribution function based on a sample $(U_1,V_1),\cdots,(U_n,V_n)$ of independent and identically distributed random  variables with marginals uniformly distributed on $[0,1]$. 
Define the following empirical process
 $$\mathbb{\widetilde{C}}_n(u,v)=\sqrt{n}[\widetilde{C}_n(u,v)- C(u,v)],\quad (u,v)\in[0,1]^2.$$
Then, one can easily prove that  
\begin{equation}\label{c3}
\mathbb{\widetilde{C}}_n(u,v)=\mathbb{C}_n(u,v)+ \frac{1}{\sqrt{n}}.
\end{equation}
Let $\phi$ be an increasing transformation with values in $[0,1]$. For $n\geq 1$, $0<h<1$, set
$$D_{n,h}(u,v):=\hat{C}_{n,h}^{(T)}(u,v)-\mathbb{E}\hat{C}_{n,h}^{(T)}(u,v)$$ and 
$$g_{n,h}:=\hat{C}_{n,h}^{(T)}(u,v)-\widetilde{C}_n(u,v). $$ Then, one has
\begin{eqnarray*}
g_{n,h}&=& \frac{1}{n}\sum_{i=1}^{n}\left [K\left(\frac{\phi^{-1}(u)-\phi^{-1}(\hat{U}_i)}{h}\right)K\left(\frac{\phi^{-1}(v)-\phi^{-1}(\hat{V}_i)}{h} \right)- \mathbb{I}\{U_i\leq u,V_i\leq v\}\right ]\\
& =& \frac{1}{n}\sum_{i=1}^{n}\left [K\left(\frac{\phi^{-1}(u)-\phi^{-1}(\hat{F}_n \circ F^{-1}(U_i))}{h}\right)K\left(\frac{\phi^{-1}(v)-\phi^{-1}(\hat{G}_n\circ G^{-1}(V_i))}{h} \right)-\mathbb{I}\{U_i\leq u,V_i\leq v\}\right]\\
& =:& \frac{1}{n}\sum_{i=1}^{n}g(U_i,V_i,h),
\end{eqnarray*}
where $g$ belongs to the class of measurable functions $\mathcal{G}$ defined as 
$$
\mathcal{G}=\left\{\begin{array}{c} 
g:(s,t,h)\mapsto g(s,t,h)=K\left(\frac{\phi^{-1}(u)-\phi^{-1}(\zeta_{1,n}(s))}{h}\right) K\left(\frac{\phi^{-1}(v)-\phi^{-1}(\zeta_{2,n}(t))}{h}\right)- \mathbb{I}\{s\leq u,t\leq v\},\\
 u,v\in[0,1], 0<h<1 \,\text{and }\, \zeta_{1,n} ; \zeta_{2,n}:[0,1]\mapsto [0,1] \,\text{nondecreasing.}
\end{array} \right\}
$$
Since $\mathbb{E}\widetilde{C}_n(u,v)=C(u,v)$, one can observe that 
$$
\sqrt{n}\vert g_{n,h}- \mathbb{E}g_{n,h}\vert= \vert\sqrt{n}D_{n,h}(u,v)-\mathbb{\widetilde{C}}_n(u,v)\vert.
$$

Now, we have to apply the main Theorem of  Mason and Swanepoel (2010) \cite{r5} which gives the order of convergence of the deviation from their expectations of kernel-type function estimators. Towards this end, the above class of functions $\mathcal{G}$ must satisfy the following  four conditions : 
\begin{itemize}
\item[(G.i)]There exists a finite constant $\kappa>0$ such that
 $$ \sup_{0\leq h\leq 1}\sup_{g\in \mathcal{G}}\left\|g\left(\cdot,\cdot,h\right)\right\|_\infty=\kappa <\infty. $$
\item[(G.ii)]\ \ \ There exists a constant $C'>0$ such that for all $h\in [0,1]$,
$$ \sup_{g\in \mathcal{G}}\mathbb{E}\left[g^2\left(U,V,h\right)\right]\leq C'h. $$
\item[(F.i)]\ \ \ 
$\mathcal{G}$ satisfies the uniform entropy condition, i.e., 
$$\exists \, C_0>0, \nu_0>0\ :\ N\left(\epsilon,\mathcal{G}\right)\leq C_0\epsilon^{-\nu_0}.$$
\item[(F.ii)]\ \ \ $\mathcal{G}$ is a pointwise measurable class, i.e there exists a countable sub-class $\mathcal{G}_0$ of $\mathcal{G}$ such that for all $g\in \mathcal{G}$, there exits $\left(g_m\right)_m\subset \mathcal{G}_0$ such that $g_m\longrightarrow g.$\\
\end{itemize}

The checking of these conditions will be done in Appendix and constitutes the proof of the following proposition.
\begin{proposition}\label{p1}
Suppose that the copula function $C$ has bounded first-order partial derivatives on $(0, 1)^2$ and that the transformation $\phi$ admits a bounded derivative $\phi'$. Then assuming (G.i), (G.ii), (F.i) and (F.ii), we have for some $c > 0,\ 0 < h_0 < 1,$ with probability one, 
$$
\limsup_{n\rightarrow\infty}\sup_{\frac{c\log n}{n}\leq h \leq h_0}\sup_{(u,v)\in(0,1)^2}
\frac{|\sqrt{n}D_{n,h}(u,v)-\tilde{\mathbb{C}}_n(u,v)|}{\sqrt{h(|\log h\vert\vee\log\log n)}}=A(c),
$$
where $A(c)$ is a positive constant.
\end{proposition}

\begin{corollary}\label{crl1}
 Under the assumptions of Proposition \ref{p1}, one has for any sequence of constants $0<b_n<1,$ satisfying $\ b_n\rightarrow 0,\ b_n\geq (\log n)^{-1}$, with probability one, 
 $$
 \sup_{\frac{c\log n}{n}\leq h \leq b_n}\sup_{(u,v)\in(0,1)^2}
\frac{|\sqrt{n}\hat{D}_{n,h}^{(T)}(u,v)-\tilde{\mathbb{C}}_n(u,v)|}{\sqrt{\log\log n}}=O(\sqrt{b_n}).
 $$
 \end{corollary}
\begin{proof}{( \textbf{Corollary \ref{crl1})}}\\
{\rm First, observe that the condition $b_n\geq (\log n)^{-1}$ implies
\begin{equation}\label{dc}
\frac{\vert \log b_n\vert}{\log\log n}\leq 1.
\end{equation}
Next, by the monotonicity of the function $x\mapsto x\vert\log x \vert$ on $[0,1/e]$, one can write for $n$ large enough, $h\vert\log h \vert\leq b_n\vert\log b_n \vert$ and hence,
\begin{equation}
h(\vert\log h \vert  \vee\log\log n)\leq b_n(\vert\log b_n \vert \vee\log\log n).
\end{equation}
Combining this and Proposition \ref{p1}, we obtain 
$$
 \sup_{\frac{c\log n}{n}\leq h \leq b_n}\sup_{(u,v)\in(0,1)^2}
\frac{|\sqrt{n}\hat{D}_{n,h}^{(T)}(u,v)-\tilde{\mathbb{C}}_n(u,v)|}{\sqrt{b_n\log\log n\left(\frac{\vert \log b_n\vert}{\log\log n}\vee 1\right)}}=O(1).
 $$
Thus the Corollary \ref{crl1} follows from  \eqref{dc}.}\\
\end{proof}

Coming back to the proof of our Theorem \ref{t1}, we have to show that the deviation $D_{n,h}(u,v)$, suitably normalized, is almost surely uniformly bounded, as $n\rightarrow\infty$. For this, it suffices to prove that
\begin{equation}\label{lil}
\limsup_{n\rightarrow\infty}\sup_{\frac{c\log n}{n}\leq h \leq b_n}\sup_{(u,v)\in[0,1]^2}\frac{\left|\sqrt{n}D_{n,h}(u,v)\right|}{\sqrt{2\log\log n}}\leq 3.
\end{equation}
We will make use of an approximation of the empirical copula process $\mathbb{C}_n$ by a Kiefer process (see e.g., Zari\cite{r11}, page 100). Let $\mathbb{W}(u,v,t)$ be a $3$-parameters  Wiener process defined on $[0,1]^2\times[0,\infty)$. Then the Gaussian process $\mathbb{K}(u,v,t)=\mathbb{W}(u,v,t)-\mathbb{W}(1,1,t).uv$ is called a $3$-parameters  Kiefer process defined on $[0,1]^2\times[0,\infty)$.\\
\indent By Theorem 3.2 in Zari\cite{r11}, for $d=2$, there exists a sequence of Gaussian processes $\left\{\mathbb{K}_{C}(u,v,n), u,v\in[0,1], n>0\right\}$ such that
$$ \sup_{(u,v)\in[0,1]^2}\left|\sqrt{n}\mathbb{C}_n(u,v)-\mathbb{K}_{C}^\ast(u,v,n)\right|=O\left(n^{3/8}(\log n)^{3/2}\right),$$ where 
$$\mathbb{K}_{C}^\ast(u,v,n)=\mathbb{K}_{C}(u,v,n)-\mathbb{K}_{C}(u,1,n)\frac{\partial C(u,v)}{\partial u}-\mathbb{K}_\mathbb{C}(1,v,n)\frac{\partial C(u,v)}{\partial v}.$$
This yields
\begin{equation}\label{c1}
\limsup_{n\rightarrow\infty}\sup_{(u,v)\in[0,1]^2}\frac{\left|\mathbb{C}_n(u,v)\right|}{\sqrt{2\log\log n}}=\limsup_{n\rightarrow\infty}\sup_{(u,v)\in[0,1]^2}\frac{\left|\mathbb{K}_{C}^\ast(u,v,n)\right|}{\sqrt{2n\log\log n}}.
\end{equation}
By the works of Wichura\cite{r10} on the  law of the iterated logarithm , for $d=2$, one has almost surely
\begin{equation}\label{c2}
\limsup_{n\rightarrow\infty}\sup_{(u,v)\in[0,1]^2}\frac{\left|\mathbb{K}_\mathbb{C}^\ast(u,v,n)\right|}{\sqrt{2n\log\log n}}\leq 3,
\end{equation}

\noindent which entails  
$$\limsup_{n\rightarrow\infty}\sup_{(u,v)\in[0,1]^2}\frac{\left|\mathbb{C}_n(u,v)\right|}{\sqrt{2\log\log n}}\leq 3.$$
Since ${\mathbb{C}}_n(u,v)$ and $\tilde{\mathbb{C}}_n(u,v)$ are asymptotically equivalent in view of (\ref{c3}), one obtains
$$\limsup_{n\rightarrow\infty}\sup_{(u,v)\in[0,1]^2}\frac{\left|\tilde{\mathbb{C}}_n(u,v)\right|}{\sqrt{2\log\log n}}\leq 3.$$
Applying Corollary \ref{crl1} and recalling the fact that $b_n\rightarrow 0$, one obtains \eqref{lil} which proves Theorem \ref{t1}.
\end{proof}
\begin{proof} (\textbf{Theorem \ref{t2}})
 Let 
$$ B_{n,h}(u,v) =  \mathbb{E}\hat{C}_{n,h}^{(T)}(u,v) - C(u,v).$$

Observe that by  hypothesis (H.1) on the kernel $k(.)$, we can write for all $(u,v)\in [0,1]^2$,
$$
C(u,v)=\int_{-1}^{1}\int_{-1}^{1}C(u,v)k(s)k(t)dsdt.
$$
Put $\hat{U}_i=\frac{n}{n+1}F_n(X_i)=\frac{n}{n+1}F_n\circ F^{-1}(U_i)=:\zeta_{1,n}(U_i))$  and $\hat{V}_i=\frac{n}{n+1}G_n(Y_i)=\frac{n}{n+1}G_n\circ G^{-1}(V_i)=:\zeta_{2,n}(V_i))$ . Then, we can write
\begin{eqnarray*}
\mathbb{E}\hat{C}_{n,h}^{(T)}(u,v)&= &\mathbb{E}\left[K\left(\frac{\phi^{-1}(u)-\phi^{-1}(\hat{U_i})}{h} \right)K\left(\frac{\phi^{-1}(v)-\phi^{-1}(\hat{V_i})}{h}\right)\right]\\
&=& \int_{-1}^{1}\int_{-1}^{1}\mathbb{E}\mathbb{I}\{U_i\leq\zeta_{1,n}^{-1}[\phi(\phi^{-1}(u)-sh)],V_i\leq \zeta_{2,n}^{-1}[\phi(\phi^{-1}(v)-th )]\}k(s)k(t)dsdt\\
&=& \int_{-1}^{1}\int_{-1}^{1}C\left(\zeta_{1,n}^{-1}[\phi(\phi^{-1}(u)-sh )],\zeta_{2,n}^{-1}[\phi(\phi^{-1}(v)-th)]\right)k(s)k(t)dsdt.
\end{eqnarray*}
Thus
\begin{equation}
B_{n,h}(u,v) = \int_{-1}^{1}\int_{-1}^{1}\left[ C\left(\zeta_{1,n}^{-1}[\phi(\phi^{-1}(u)-sh)],\zeta_{2,n}^{-1}[\phi(\phi^{-1}(v)-th )]\right) - C(u,v)\right] k(s)k(t)dsdt.
\end{equation}
Making use of the Chung (1949)'s law of the iterated logarithm, we can infer that, whenever $F$ is continuous and admits a bounded density, for all $u\in [0,1]$, as $n\rightarrow\infty$,
$$\zeta_{1,n}^{-1}(u)-u \approx F\circ\hat{F}_n^{-1}(u)-F\circ F^{-1}(u)=O(n^{-1}\log\log n).$$
That is, $\zeta_{1,n}^{-1}(u)$ is asymptotically equivalent to $u$. As well, we have $\zeta_{2,n}^{-1}(v)=G\circ\hat{G}_n^{-1}(v)$ is asymptotically equivalent to $v$, for all $v\in [0,1]$ . Thus, for all large $n$, one can write
$$
B_{n,h}(u,v)= \int_{-1}^{1}\int_{-1}^{1}\left[ C(\phi(\phi^{-1}(u)-sh),\phi(\phi^{-1}(v)-th)) - C(u,v)\right] k(s)k(t)dsdt.
$$
By applying a 2-order Taylor expansion for the copula function $C$, we obtain
\begin{eqnarray*}
& C(\phi(\phi^{-1}(u)-sh),\phi(\phi^{-1}(v)-th)) -C(u,v)= &\\
& [\phi(\phi^{-1}(u)-sh)-u]C_u(u,v) + [\phi(\phi^{-1}(v)-th)-v]C_v(u,v)+ [\phi(\phi^{-1}(u)-sh)-u]^2\frac{C_{uu}(u,v)}{2} & \\
& + [\phi(\phi^{-1}(v)-th)-v]^2\frac{C_{vv}(u,v)}{2}+ [\phi(\phi^{-1}(u)-sh)-u][\phi(\phi^{-1}(v)-th)-v] C_{uv}(u,v)+o(h^2),& 
\end{eqnarray*}
where $$C_{uu}(u,v)=\frac{\partial^2 C}{\partial u^2}(u,v)\; ;\;C_{vv}(u,v)=\frac{\partial^2 C}{\partial v^2}(u,v)\; ;\;C_{uv}(u,v)=\frac{\partial^2 C}{\partial u\partial v}(u,v). $$
Applying again a 1-order Taylor expansion for the function $\phi$, we get
$$ \phi(\phi^{-1}(u)-sh)-u=\phi(\phi^{-1}(u)-sh) - \phi(\phi^{-1}(u))=-\phi'(\phi^{-1}(u))sh+o(h)$$
and
$$ \phi(\phi^{-1}(v)-sh)-v=\phi(\phi^{-1}(v)-th) - \phi(\phi^{-1}(v)) =-\phi'(\phi^{-1}(v))th+o(h).$$
Thus
\begin{eqnarray*}
& C(\phi(\phi^{-1}(u)-sh),\phi(\phi^{-1}(v)-th)) -C(u,v)= &\\
& -\phi'(\phi^{-1}(u))sh C_u(u,v)  -\phi'(\phi^{-1}(v))th C_v(u,v)+ o(h)\\
& [\phi'(\phi^{-1}(u))sh]^2\frac{C_{uu}(u,v)}{2}  +[\phi'(\phi^{-1}(v))th]^2\frac{C_{vv}(u,v)}{2}+ [\phi'(\phi^{-1}(u))][\phi'(\phi^{-1}(u))]st h^2C_{uv}(u,v)+o(h^2).& 
\end{eqnarray*}
Using the fact that $k(.)$ is 2-order kernel ; i.e., $ \int_{-1}^{1}sk(s)ds=0$ and $\int_{-1}^{1}s^2k(s)ds\neq 0,$ 
we obtain, by Fubini's Theorem, that for all $(u,v)\in [0,1]^2$,
 \begin{equation}
 B_{n,h}(u,v)=  \frac{h^2}{2}\left[ \phi'(\phi^{-1}(u))^2 C_{uu}(u,v)\int_{-1}^{1} s^2 k(s)ds + \phi'(\phi^{-1}(v))^2 C_{vv}(u,v)\int_{-1}^{1}t^2k(t)dt\right].
 \end{equation}
Since the second-order partial derivatives $C_{uu}, C_{vv}$ and $\phi'$  are assumed to be bounded,  we can write
$$
\sup_{0< h \leq b_n}\sup_{(u,v)\in[0,1]^2}B_{n,h}(u,v)= O(b_n^2).
$$
Then
\begin{equation} 
\left(\frac{n}{2\log\log n}\right)^{1/2}\sup_{0< h \leq b_n}\sup_{(u,v)\in[0,1]^2}B_{n,h}(u,v)=O\left(\frac{\sqrt{n}b_n^2}{\sqrt{2\log\log n}}\right)=o(1), 
\end{equation}
which completes the proof of Theorem \ref{t2}.
\end{proof}
\section{Bandwidth choice}
As we noted in the introduction, the choice of the bandwidth is a very difficult problem. Since the asymptotic expressions of the bias and variance of the estimator $\hat{C}_{n,h}^{(T)}(u,v)$ are not available yet, we cannot apply the plug-in method which rely on the minimization of the asymptotic \textit{mean integrated square error}. Instead, we may employ a cross-validation method following Sarda (1993)\cite{sar}.
Recall the empirical copula estimator based directly on Sklar's Theorem 
$$ C_n(u,v)= H_n(F_n^{-1}(u),G_n^{-1}(v)),$$
where $H_n$, $F_n$ and $G_n$ are the empirical cumulative distribution functions of $H$, $F$ and $G$, respectively. Let
$$\hat{C}_{n,h,-i}^{(T)}(\hat{U}_i,\hat{V}_i)=\frac{1}{n-1}\sum_{j=1,j\neq i}^{n}K\left(\frac{\phi^{-1}(\hat{U}_i)-\phi^{-1}(\hat{U}_j)}{h} \right)K\left(\frac{\phi^{-1}(\hat{V}_i)-\phi^{-1}(\hat{V}_j)}{h} \right)$$ 
be the leave-out-$(\hat{U}_i,\hat{V}_i)$ version of the estimator $\hat{C}_{n,h}^{(T)}(u,v)$ ; $\hat{U}_i$ and $\hat{V}_i$  are pseudo-observations defined previously. Then, Sarda's criterion can be defined, here, as
\begin{equation}
CV(h)=\frac{1}{n}\sum_{i=1}^{n}\left[ \hat{C}_{n,h,-i}^{(T)}(\hat{U}_i,\hat{V}_i)-C_n(\hat{U}_i,\hat{V}_i)\right]^2w(\hat{U}_i,\hat{V}_i),
\end{equation}
where $w(\cdot,\cdot)$ is a measurable bounded weight function with compact support.\\
Let $a_n$ and $b_n$ be as in Proposition \ref{pp1} and choose a data-dependent bandwidth $\hat{h}_{opt}$ that is solution to the following minimization problem :
$$ \min_{h\in [a_n,b_n]} CV(h).$$
Since $\hat{h}_{opt}\in [a_n,b_n]$, the uniform almost sure consistency of $\hat{C}_{n,\hat{h}_{opt}}^{(T)}(u,v)$ is guarranted  by Proposition \ref{pp1}.
\section{Simulation study}
Here, we  make some numerical experiments to show the performance of the transformation kernel estimator $\hat{C}_{n,\hat{h}_{opt}}^{(T)}$. Before hand, we determine graphically the optimal bandwidth $\hat{h}_{opt}$, by visualizing the curve of $CV(h)$ over $ h\in [a_n,b_n]$, where $a_n$ and $b_n$   fulfill  the conditions of Proposition \ref{pp1}. To this end, we choose $a_n=\log n/n$ and $b_n=(\log\log n/n^2)^{1/4}$ which satisfy  assumptions of Proposition \ref{pp1}, and fix  a sample size $n=100$. The interval $[a_n,b_n]$ is then equal to $[0.04,0.10]$. For simplicity, we set the weight function $w(u,v)\equiv 1$. We now consider a 0.001-valued grid of points in this interval and represent the curve of $CV(h)$ in Figure \ref{fig1}.\\
\begin{figure}[ht]
\begin{center}
\includegraphics[width=8cm]{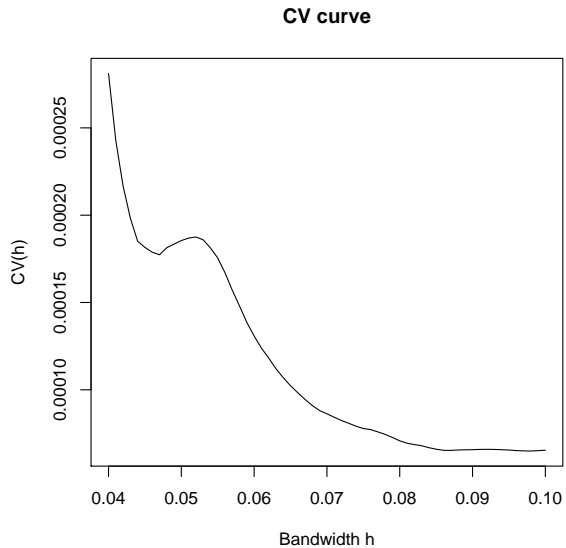}
\caption{Cross-validation curve.}
\label{fig1}
\end{center}
\end{figure}
We observe that the cross validation criterion $CV(h)$ is decreasing, when the bandwidth h increases. It is stationnary for $h\geq 0.085$ and reaches its minimal value over this grid is obtained for $h=0.085$. So, we may take $\hat{h}_{opt}=0.085$.

Next, we compute the bias and mean square error $(mse)$ of the estimator  $\hat{C}_{n,\hat{h}_{opt}}^{(T)}$ for the Frank copula, $C_{\theta}$ given below, which admits bounded second-order partial derivatives. To compute this estimator, we  employ the conditional sampling method to generate  random samples of $n$ pairs of data $(u_1,v_1),\cdots,(u_n,v_n)$ from the Frank copula, with parameter $\theta\in \mathbb{R}$, defined as 
\begin{equation}
C_{\theta}(u,v)= -\frac{1}{\theta}\log\left[ 1+ \frac{(e^{-\theta u}-1)(e^{-\theta v}-1)}{(e^{-\theta}-1)} \right].
\end{equation}
We choose the Epanechnikov kernel density $k(t)=0.75(1-t^2)\mathbb{I}(|t|\leq 1)$ to compute  the integral $K(\cdot)$. While the transformation $\phi(.)$ is taken to be the standard Gaussian distribution function, $\phi(x)=\int_{-\infty}^x e^{-t^2/2}/\sqrt{2\pi}dt$.

To estimate the $bias$ and $mse$, we generate $B=1000$ samples and  apply, for all $(u,v)\in [0,1]$, the formulas
\begin{center}
$ \displaystyle
bias(u,v)=\frac{1}{B}\sum_{b=1}^{B}\hat{C}_{n,\hat{h}_{opt},b}^{(T)}(u,v)-C_{\theta}(u,v),$\\
$ \displaystyle
mse(u,v)=\frac{1}{B}\sum_{b=1}^{B}\left(\hat{C}_{n,\hat{h}_{opt},b}^{(T)}(u,v)-C_{\theta}(u,v)\right)^2,
$
\end{center}
where $\hat{C}_{n,\hat{h}_{opt},b}^{(T)}(u,v)$ is the transformation estimation calculated with the $b^{th}$ sample.
For arbitrary values of $\theta= -2,1,5$  and different values for the couple $(u,v)$, we obtain the results in Table \ref{tab1}. In each colum of value $\theta$, we report the $bias$ first and the $mse$ below for arbitrary chosen couples $(u,v)\in(0,1)^2$. The results are very conclusive, showing that the cross validation method may be applied to select the bandwidth $h$ for the transformation kernel estimator of copulas.
\begin{table}[htbp]
\begin{center}
$$\begin{array}{|c|c|ccc|}
\hline
 (u,v) &  & \theta=-2 &  \theta=1  & \theta=5\\
 \hline
 \multirow{2}*{(0.1,0.1)}
& bias & 0.036 &  0.025  &   0.006\\
& mse & 0.0013 &  0.0006  &  4e-5\\
\hline
\multirow{2}*{(0.2,0.2)}
& bias & 0.043 &  0.008  & -0.041\\
& mse & 0.0018 &  7e-5  &  0.0017\\
\hline
\multirow{2}*{(0.5,0.5)}
& bias & 0.108 &  0.017  &   -0.078\\
& mse & 0.0117 &  0.0003  &  0.0062\\
\hline
\multirow{2}*{(0.8,0.8)}
& bias & 0.067 &  0.032  &   -0.0173\\
& mse & 0.0044 &  0.0010  &  0.0003\\
\hline
\multirow{2}*{(0.9,0.9)}
& bias & 0.057 &  0.046  &   0.027\\
& mse & 0.0033 &  0.0022  &  0.0007\\
\hline
\end{array}
$$
\end{center}
\caption{Bias and mean square error of the transformation kernel estimator.}
\label{tab1}
\end{table}

\section*{Appendix}
\begin{proof}(\textbf{Proposition \ref{p1}})\\
To simplify the notations, we consider a general function $K(\cdot,\cdot)$ which is the integral of a symmetric bounded kernel $k(\cdot,\cdot)$, supported on $[-1,1]^2$; i.e., $K(x,y)=\int_0^x\int_0^y k(s,t)dsdt$. We have to check (G.i), (G.ii), (F.i) and (F.ii).\\

\noindent \textbf{Checking for (G.i):}
Recall that $(U_i,V_i), i\geq 1$ are iid random variables uniformly distributed on $[0,1]^2$, $\zeta_{1,n}(U_i)=\hat{F}_n o F^{-1}(U_i)$ and $\zeta_{2,n}(V_i)=\hat{G}_n o G^{-1}(V_i)$.
For any function $g\in\mathcal{G}$ and $0<h<1$, we can write 
\begin{eqnarray*}
g\left(U_i,V_i,h\right) & =&K\left(\frac{\phi^{-1}(u)-\phi^{-1}(\zeta_{1,n}(U_i))}{h},\frac{\phi^{-1}(v)-\phi^{-1}(\zeta_{2,n}(V_i))}{h}\right)- \mathbb{I}\{U_i\leq u,V_i\leq v\}\\
& =&\int_{-\infty}^{\frac{\phi^{-1}(u)-\phi^{-1}(\zeta_{1,n}(U_i))}{h}} \int_{-\infty}^{\frac{\phi^{-1}(v)-\phi^{-1}(\zeta_{2,n}(V_i))}{h}}k(s,t)dsdt-\mathbb{I}\{U_i\leq u,V_i\leq v\}\\
& =&\int_{-1}^{1}\int_{-1}^{1}\mathbb{I}\left\{U_i\leq \zeta_{1,n}^{-1}\circ\phi(\phi^{-1}(u)-th) ,V_i\leq \zeta_{2,n}^{-1}\circ\phi(\phi^{-1}(v)-sh)\right\}k(s,t)dsdt \\
 & & -\mathbb{I}\{U_i\leq u,V_i\leq v\}\\
&\leq &\int_{-1}^{1}\int_{-1}^{1}k(t)k(s,t)dsdt-\mathbb{I}\{U_i\leq u,V_i\leq v\}\ \leq 4 \|k\|^2+1,\\
\end{eqnarray*}
where $\displaystyle \|k\|=\sup_{(s,t)\in[-1,1]^2}|k(s,t)|$ represents the supremum norm on $[-1,1]^2$. Thus (G.i) holds by taking $\kappa:=4\|k\|^2+1.$\\

\noindent \textbf{Checking for (G.ii).} We have to show that $\displaystyle \sup_{g\in \mathcal{G}}\mathbb{E}g^2(U,V,h)\leq C_0 h$, where $C_0$ is a positive  constant. One can write
\begin{eqnarray*}\small
&\mathbb{E}g^2(U,V,h) =  \mathbb{E}\left[K\left(\frac{\phi^{-1}(u)-\phi^{-1}(\zeta_{1,n}(U))}{h},\frac{\phi^{-1}(v)-\phi^{-1}(\zeta_{2,n}(V))}{h}\right)- \mathbb{I}\{U\leq u,V\leq v\}\right]^2 &\\
&=  \mathbb{E}\left[K^2\left(\frac{\phi^{-1}(u)-\phi^{-1}(\zeta_{1,n}(U))}{h},\frac{\phi^{-1}(v)-\phi^{-1}(\zeta_{2,n}(V))}{h}\right)\right] &\\
&  - 2\mathbb{E}\left[K\left(\frac{\phi^{-1}(u)-\phi^{-1}(\zeta_{1,n}(U))}{h},\frac{\phi^{-1}(v)-\phi^{-1}(\zeta_{2,n}(V))}{h}\right)\mathbb{I}\{U\leq u,V\leq v\}\right] + C(u,v)&\\
& =:  A -2B + C(u,v)&.
\end{eqnarray*}
 Since the function $K(\cdot,\cdot)$ is a kernel of a distribution function, we may assume without loss of generality that it takes its values in $[0,1]$. Then, we can use the inequality $K^2(x,y)\leq K(x,y)$ to bound up the term $A$ in the right hand side of the previous egality.
\begin{eqnarray*}
&A  =   \mathbb{E}\left[K^2\left(\frac{\phi^{-1}(u)-\phi^{-1}(\zeta_{1,n}(U))}{h},\frac{\phi^{-1}(v)-\phi^{-1}(\zeta_{2,n}(V))}{h}\right)\right]&\\
&\leq  \mathbb{E}\left[K\left(\frac{\phi^{-1}(u)-\phi^{-1}(\zeta_{1,n}(U))}{h},\frac{\phi^{-1}(v)-\phi^{-1}(\zeta_{2,n}(V))}{h}\right)\right]&\\
& \leq \mathbb{E}\left[ \int_{-1}^{1}\int_{-1}^{1}\mathbb{I}\left\{U\leq \zeta_{1,n}^{-1}\circ\phi(\phi^{-1}(u)-sh),V\leq \zeta_{2,n}^{-1}\circ\phi(\phi^{-1}(v)-th)\right\}k(s,t)dsdt \right].&
\end{eqnarray*}
The other term $B$ can be written into
\begin{eqnarray*}
&B  = \mathbb{E}\left[K\left(\frac{\phi^{-1}(u)-\phi^{-1}(\zeta_{1,n}(U))}{h},\frac{\phi^{-1}(v)-\phi^{-1}(\zeta_{2,n}(V))}{h}\right)\mathbb{I}\{U\leq u,V\leq v\}\right]&\\
&= \mathbb{E}\left[ \int_{-1}^{1}\int_{-1}^{1}\mathbb{I}\left\{s\leq \frac{\phi^{-1}(u)-\phi^{-1}(\zeta_{1,n}(U))}{h},t\leq \frac{\phi^{-1}(v)-\phi^{-1}(\zeta_{2,n}(V))}{h}\right\} \mathbb{I}\{U\leq u,V\leq v\}k(s,t)dsdt\right]&\\
&= \mathbb{E}\left[ \int_{-1}^{1}\int_{-1}^{1}\mathbb{I}\left\{U\leq u\wedge \zeta_1^{-1}\circ\phi(\phi^{-1}(u)-sh),V\leq v\wedge \zeta_2^{-1}\circ\phi(\phi^{-1}(v)-th)\right\}k(s,t)dsdt \right],&
\end{eqnarray*} where $x\wedge y=\min (x,y)$.
Note that $$ C(u,v)=\int_{-1}^{1}\int_{-1}^{1}C(u,v)k(s,t)dsdt,$$
as the kernel $k(\cdot,\cdot)$ satisfies  $\int_{-1}^{1}\int_{-1}^{1}k(s,t)dsdt=1$. Thus
\begin{eqnarray*}
&\mathbb{E}g^2(U,V,h)\leq \mathbb{E}\left[ \int_{-1}^{1}\int_{-1}^{1}\mathbb{I}\left\{U\leq \zeta_{1,n}^{-1}\circ\phi(\phi^{-1}(u)-sh),V\leq \zeta_{2,n}^{-1}\circ\phi(\phi^{-1}(v)-th)\right\}k(s,t)dsdt \right]&\\
 &  -2\mathbb{E}\bigg[ \int_{-1}^{1}\int_{-1}^{1}\mathbb{I}\left\{U\leq u\wedge \zeta_{1,n}^{-1}\circ\phi(\phi^{-1}(u)-sh),V\leq v\wedge \zeta_{2,n}^{-1}\circ\phi(\phi^{-1}(v)-th)\right\}k(s,t)dsdt \bigg]& \\
 & + \int_{-1}^{1}\int_{-1}^{1}C(u,v)k(s,t)dsdt.&
\end{eqnarray*}
We shall suppose that the empirical kernel distributions $\hat{F}_n$ and $\hat{G}_n$ are asymptotically equivalent to the classical empirical distribution functions $F_n$ and $G_n$, respectively. From the Chung (1949)'s LIL, we can infer that, whenever $F$ admits a bounded density, for all $u\in [0,1]$, as $n\rightarrow\infty$,
$$\zeta_{1,n}^{-1}(u)-u = F\circ\hat{F}_n^{-1}(u)-F\circ F^{-1}(u)=O(n^{-1}\log\log n).$$
That is $\zeta_{1,n}^{-1}(u)$ is asymptotically equivalent to $u$. As well, we have $\zeta_{2,n}^{-1}(v)=G\circ\hat{G}_n^{-1}(v)$ is asymptotically equivalent to $v$. Thus, for all large $n$, we can write
\begin{eqnarray*}
&\mathbb{E}g^2(U,V,h)\leq \mathbb{E}\left[ \int_{-1}^{1}\int_{-1}^{1}\mathbb{I}\left\{U\leq \phi(\phi^{-1}(u)-sh),V\leq \phi(\phi^{-1}(v)-th)\right\}k(s,t)dsdt \right]&\\
 &  -2\mathbb{E}\left[ \int_{-1}^{1}\int_{-1}^{1}\mathbb{I}\left\{U\leq u\wedge \phi(\phi^{-1}(u)-sh),V\leq v\wedge \phi(\phi^{-1}(v)-th)\right\}k(s,t)dsdt \right]&\\
 &  +\int_{-1}^{1}\int_{-1}^{1}C(u,v)k(s,t)dsdt.&
\end{eqnarray*}
That is,
\begin{eqnarray}\label{eqcas}
\mathbb{E}g^2(U,V,h)&\leq &\int_{-1}^{1}\int_{-1}^{1}C\left(\phi(\phi^{-1}(u)-sh),\phi(\phi^{-1}(v)-th)\right)k(s,t)dsdt \nonumber\\
 & & -2\int_{-1}^{1}\int_{-1}^{1}C\left(u\wedge\phi(\phi^{-1}(u)-sh),v\wedge \phi(\phi^{-1}(v)-th)\right)k(s,t)dsdt \\
 &  & +\int_{-1}^{1}\int_{-1}^{1}C(u,v)k(s,t)dsdt.\nonumber
\end{eqnarray}
Now, we have to discuss condition (G.ii) in the four following cases:\\

\textbf{Case 1.} $u\wedge\phi(\phi^{-1}(u)-sh)=\phi(\phi^{-1}(u)-sh)\ \text{et}\ v\wedge \phi(\phi^{-1}(v)-th)=\phi(\phi^{-1}(v)-th)$.\\
In this case the second member of inequality \eqref{eqcas} is reduced and we have 
\begin{eqnarray*}
\mathbb{E}g^2(U,V,h)&\leq &-\int_{-1}^{1}\int_{-1}^{1}C\left(\phi(\phi^{-1}(u)-sh),\phi(\phi^{-1}(v)-th)\right)k(s,t)dsdt \\
  &  & +\int_{-1}^{1}\int_{-1}^{1}C(u,v)k(s,t)dsdt\\
	&\leq &\int_{-1}^{1}\int_{-1}^{1}\left[C(u,v)-C\left(\phi(\phi^{-1}(u)-sh),\phi(\phi^{-1}(v)-th)\right)\right]k(s,t)dsdt.
\end{eqnarray*}
By a Taylor expansion for the copula function $C$, we have
\begin{eqnarray*}
C(u,v)-C(\phi(\phi^{-1}(u)-sh),\phi(\phi^{-1}(v)-th))&=&[u-\phi(\phi^{-1}(u)-sh)]C_u(u,v)\\
 &+ & [v-\phi(\phi^{-1}(v)-th)]C_v(u,v)+ o(h). 
\end{eqnarray*}
Applying again a Taylor-Young expansion for the function $\phi$, we obtain
$$ u-\phi(\phi^{-1}(u)-sh)=\phi(\phi^{-1}(u)) -\phi(\phi^{-1}(u)-sh)=\phi'(u)sh+o(h)$$
and
$$ v-\phi(\phi^{-1}(v)-sh)=\phi(\phi^{-1}(v)) -\phi(\phi^{-1}(v)-th)=\phi'(v)th+o(h).$$
Thus
\begin{eqnarray*}
\mathbb{E}g^2(U,V,h)&\leq &\int_{-1}^{1}\int_{-1}^{1}\left[\phi'(u)C_u(u,v)h+\phi'(v)C_v(u,v)h\right]k(s,t)dsdt \\
	&\leq & 4h\left[\|C_u\|+\|C_v\|\right]\sup_{x\in\mathbb{R}}|\phi'(x)|\|k\|.
\end{eqnarray*}
Taking $C_0=4\left[\left\|C_u\right\|+\left\|C_v\right\|\right]\left\|\phi'\right\|\left\|k\right\|$ gives condition (G.ii).\\

\textbf{Case 2.} $u\wedge\phi(\phi^{-1}(u)-sh)=u\ \text{et}\ v\wedge \phi(\phi^{-1}(v)-th)=v$. \\
Here the inequality \eqref{eqcas} is reduced to
\begin{eqnarray*}
\mathbb{E}g^2(U,V,h)\leq \int_{-1}^{1}\int_{-1}^{1}\left[C\left(\phi(\phi^{-1}(u)-sh),\phi(\phi^{-1}(v)-th)\right)-C(u,v)\right]k(s,t)dsdt.
\end{eqnarray*}
Using the same arguments as in Case 1, we obtain condition (G.ii):
$\displaystyle \sup_{g\in \mathcal{G}}\mathbb{E}g^2(U,V,h)\leq C_0h$, with $C_0=4\left[\left\|C_u\right\|+\left\|C_v\right\|\right]\left\|\phi'\right\|\left\|k\right\|$.\\

\textbf{Case 3.} $u\wedge\phi(\phi^{-1}(u)-sh)=\phi(\phi^{-1}(u)-sh)\ \text{et}\ v\wedge \phi(\phi^{-1}(v)-th)=v$.\\
Here, inequality \eqref{eqcas} is rewritten into
\begin{eqnarray*}
&\mathbb{E}g^2(U,V,h)\leq \int_{-1}^{1}\int_{-1}^{1}C\left(\phi(\phi^{-1}(u)-sh),\phi(\phi^{-1}(v)-th)\right)k(s,t)dsdt & \\
 & -2\int_{-1}^{1}\int_{-1}^{1}C\left(\phi(\phi^{-1}(u)-sh),v\right)k(s,t)dsdt  +\int_{-1}^{1}\int_{-1}^{1}C(u,v)k(s,t)dsdt &\\
&\leq \int_{-1}^{1}\int_{-1}^{1}\left[C\left(\phi(\phi^{-1}(u)-sh),\phi(\phi^{-1}(v)-th)\right)-C\left(\phi(\phi^{-1}(u)-sh),v\right)\right]k(s,t)dsdt &\\
& -\int_{-1}^{1}\int_{-1}^{1}\left[C\left(\phi(\phi^{-1}(u)-sh),v\right)-C(u,v)\right]k(s,t)dsdt. &
\end{eqnarray*}
By applying successively a Taylor expansion for $C$ and for $\phi$, we get
\begin{eqnarray*}
\mathbb{E}g^2(U,V,h)&\leq &\int_{-1}^{1}\int_{-1}^{1}C_v\left(\phi(\phi^{-1}(u)-sh),\theta_1\right)\left[\phi(\phi^{-1}(v)-th)-\phi(\phi^{-1}(v))\right]k(s,t)dsdt\\
& & -\int_{-1}^{1}\int_{-1}^{1}C_u\left(\theta_2,v\right)\left[\phi(\phi^{-1}(u)-sh)-\phi(\phi^{-1}(u))\right]k(s,t)dsdt\\
&\leq & \int_{-1}^{1}\int_{-1}^{1}C_v\left(\phi(\phi^{-1}(u)-sh),\theta_1\right)\phi'(\gamma_1).(-th)k(s,t)dsdt\\
& & -\int_{-1}^{1}\int_{-1}^{1}C_u\left(\theta_2,v\right)\phi'(\gamma_2).(-sh)k(s,t)dsdt,
\end{eqnarray*}
where $\theta_1\in \left(\phi(\phi^{-1}(v)-th),v\right)\ ;\ \theta_2\in \left(\phi(\phi^{-1}(u)-sh),u\right)\ ;\ \gamma_1\in\left(\phi^{-1}(v)-th,\phi^{-1}(v)\right)\ ;\ \gamma_2\in\left(\phi^{-1}(u)-sh,\phi^{-1}(u)\right).$\\
This implies 
\begin{eqnarray*}
\mathbb{E}g^2(U,V,h)&\leq &4h\left\|C_v\right\|\left\|\phi'\right\|\left\|k\right\|^2\left|t\right|+4h\left\|C_u\right\|\left\|\phi'\right\|\left\|k\right\|\left|s\right|\\
&\leq & 4h\left\|\phi'\right\|\left\|k\right\|\left(\left\|C_v\right\|+\left\|C_u\right\|\right).
\end{eqnarray*}
Thus condition (G.ii) holds, with $C_0=4\left\|\phi'\right\|\left\|k\right\|\left(\left\|C_v\right\|+\left\|C_u\right\|\right).$\\

\textbf{Case 4.} $u\wedge\phi(\phi^{-1}(u)-sh)=u\ \text{et}\ v\wedge \phi(\phi^{-1}(v)-th)=\phi(\phi^{-1}(v)-th)$.\\ 
This case is analogous to Case 3, where the roles of $u$ and $v$ are interchanged. Hence, condition (G.ii) is fulfilled, with the same constant  $C_0=4\left\|\phi'\right\|\left\|k\right\|\left(\left\|C_v\right\|+\left\|C_u\right\|\right).$\\

\textbf{Checking for (F.i)}. We have to check the uniform entropy condition for the class of functions
$$
\mathcal{G}=\left\{\begin{array}{c} 
K\left(\frac{\phi^{-1}(u)-\phi^{-1}(\zeta_{1,n}(s))}{h}, \frac{\phi^{-1}(v)-\phi^{-1}(\zeta_{2,n}(t))}{h}\right)- \mathbb{I}\{s\leq u,t\leq v\},\\
 u,v\in[0,1], 0<h<1 \,\text{and }\, \zeta_{1,n}\zeta_{2,n}:[0,1]\mapsto [0,1] \,\text{nondecreasing.}
\end{array} \right\}
$$
 To this end, we consider the following classes of functions, where $\varphi$ is an increasing function :\\
\noindent
$ \mathbb{F}=\left\{(\varphi(x)+m)/\lambda,\lambda > 0,\;m\in\mathbb{R}\right\}$\\
$\displaystyle \mathbb{K}_0=\left\{K((\varphi(x)+m)/\lambda),\lambda>0, \;m\in\mathbb{R}\right\}$\\
$\displaystyle \mathbb{K}=\left\{K((\phi(x)+m)/\lambda,(\phi(y)+m)/\lambda), \lambda>0,\;m\in\mathbb{R}\right\}$\\
$\displaystyle \mathbb{H}=\left\{K((\phi(x)+m)/\lambda,(\phi(y)+m)/\lambda)-\mathbb{I}\left\{x\leq u,y\leq v\right\}\ ;\lambda> 0, \;m\in\mathbb{R}, (u,v)\in [0,1]^2\right\}$.\\

It is clear that by applying  lemmas 2.6.15 and 2.6.18 in van der Vaart and Wellner (see \cite{r9}, p. 146-147), the sets $\mathbb{F},\;\mathbb{K}_0,\;\mathbb{K},\;\mathbb{H}$ are all VC-subgraph classes. Thus, by choosing  the constant function ${\rm G}(x,y)\mapsto {\rm G}(x,y)=\left\|k\right\|^2+1 $ as an envelope function for the class $\mathbb{H}$ ( indeed ${\rm G}(x,y)\geq \sup_{g\in\mathbb{H}}\left|g(x,y)\right|,\ \forall (x,y))$, we can infer from Theorem 2.6.7 in \cite{r9} that  $\mathbb{H}$ satisfies the uniform entropy condition. Since $\mathbb{H}$ and $\mathcal{G}$ have the same structure, we can conclude that $\mathcal{G}$ satisfies this property too, i.e.
$$\exists \; C_0>0, \nu_0>0\ :\ N\left(\epsilon,\mathcal{G}\right)\leq C_0\epsilon^{-\nu_0},\quad 0<\epsilon<1.$$

\noindent \textbf{Checking for (F.ii).}\\
 Define the class of functions 
$$
\mathcal{G}_0=\left\{\begin{array}{c} 
K\left(\frac{\phi^{-1}(u)-\phi^{-1}(\zeta_{1}(s))}{h}, \frac{\phi^{-1}(v)-\phi^{-1}(\zeta_{2}(t))}{h}\right)- \mathbb{I}\{s\leq u,t\leq v\},\\
 u,v\in[0,1]\cap\mathbb{Q}, 0<h<1 \,\text{and }\, \zeta_{1}\zeta_{2}:[0,1]\mapsto [0,1] \,\text{nondecreasing.}
\end{array} \right\}
$$
It's clear that  $\mathcal{G}_0$ is countable and  $\mathcal{G}_0\subset\mathcal{G}$. Let
$$g(s,t)=K\left(\frac{\phi^{-1}(u)-\phi^{-1}(\zeta_1(s))}{h},\frac{\phi^{-1}(v)-\phi^{-1}(\zeta_2(t))}{h}\right)- \mathbb{I}\{s\leq u,s\leq v\}\in\mathcal{G}, (s,t)\in [0,1]^2$$
and for $m\geq 1$, 
$$g_m(s,t)=K\left(\frac{\phi^{-1}(u_m)-\phi^{-1}(\zeta_1(s))}{h} ,\frac{\phi^{-1}(v_m)-\phi^{-1}(\zeta_2(y))}{h}\right)- \mathbb{I}\{s\leq u_m,t\leq v_m\},$$
where $ u_m=\frac{1}{m^2}[m^2u]+\frac{1}{m^2}$ and $ v_m=\frac{1}{m^2}[m^2v]+\frac{1}{m^2}$.\\
\indent Let $\alpha_m=u_m-u,\quad \beta_m=v_m-v$. Then, we have  $\displaystyle 0<\alpha_m\leq\frac{1}{m^2}$ and $\displaystyle 0<\beta_m\leq\frac{1}{m^2}$. Hence $u_m\searrow u$ and $v_m\searrow v$. By continuity $\phi^{-1}(u_m)\searrow \phi^{-1}(u)$ and $\phi^{-1}(v_m)\searrow \phi^{-1}(v)$. Define
$$\delta_{m,u}=\left(\frac{\phi^{-1}(u_m)-\phi^{-1}(\zeta_1(x))}{h} \right)-\left(\frac{\phi^{-1}(u)-\phi^{-1}(\zeta_1(x))}{h} \right)=\frac{\phi^{-1}(u_m)-\phi^{-1}(u)}{h} $$ and
$$\delta_{m,v}=\left(\frac{\phi^{-1}(v_m)-\phi^{-1}(\zeta_2(y))}{h} \right)-\left(\frac{\phi^{-1}(v)-\phi^{-1}(\zeta_2(y))}{h} \right)=\frac{\phi^{-1}(v_m)-\phi^{-1}(v)}{h}.$$
Then
$\delta_{m,u}\searrow 0$ and $\delta_{m,v}\searrow 0$, 
which are equivalent to $$ \left(\frac{\phi^{-1}(u_m)-\phi^{-1}(\zeta_1(x))}{h} \right)\searrow \left(\frac{\phi^{-1}(u)-\phi^{-1}(\zeta_1(x))}{h} \right)$$
 and $$\left(\frac{\phi^{-1}(v_m)-\phi^{-1}(\zeta_2(y))}{h} \right)\searrow \left(\frac{\phi^{-1}(v)-\phi^{-1}(\zeta_2(y))}{h} \right).$$
By right-continuity of the kernel $K(\cdot,\cdot)$, we obtain
$$\forall (x,y)\in[0,1]^2, g_m(x,y)\longrightarrow g(x,y), m\rightarrow \infty$$
and conclude that $\mathcal{G}$ is pointwise measurable class.\\
\end{proof}
\addcontentsline{toc}{section}{References}

\end{document}